\documentclass{article}

\usepackage{arxiv}

\usepackage[utf8]{inputenc} 
\usepackage[T1]{fontenc}    
\usepackage{hyperref}       
\usepackage{url}            
\usepackage{booktabs}       
\usepackage{amsfonts, amsthm}       
\usepackage{nicefrac}       
\usepackage{microtype}      
\usepackage{lipsum}		
\usepackage{graphicx}
\usepackage{natbib}
\usepackage{doi}
\usepackage{amsmath}

\usepackage{algorithm}
\usepackage{algpseudocode}

\usepackage{tabularx} 
\usepackage{graphicx}
\usepackage{caption}
\usepackage{subcaption}
\usepackage[dvipsnames]{xcolor}
\newcommand{\w}{\mathbf}
\newcommand{\R}{\mathbb{R}}

\newtheorem{theorem}{Theorem}[section]

\newtheorem{definition}[theorem]{Definition}
\newtheorem{assumption}[theorem]{Assumption}

\title{Simba: A Scalable Bilevel Preconditioned Gradient Method for Fast Evasion of Flat Areas and Saddle Points}


\author{ Nick Tsipinakis \\ 
	Department of Mathematics\\
	UniDistance Suisse\\
	Brig, Switzerland \\
	\texttt{nikolaos.tsipinakis@unidistance.ch} \\
	\And
        Panos Parpas \\
	Department of Computing\\
	Imperial College London\\
	London, UK \\
	\texttt{panos.parpas@imperial.ac.uk} \\
}




\begin{document}
\maketitle

\begin{abstract}

The convergence behaviour of first-order methods can be severely slowed down when applied to high-dimensional non-convex functions due to the presence of saddle points. If, additionally, the saddles are surrounded by large plateaus, it is highly likely that the first-order methods will converge to sub-optimal solutions. In machine learning applications, sub-optimal solutions mean poor generalization performance. They are also related to the issue of hyper-parameter tuning, since, in the pursuit of solutions that yield lower errors, a tremendous amount of time is required on selecting the hyper-parameters appropriately. A natural way to tackle the limitations of first-order methods is to employ the Hessian information. However, methods that incorporate the Hessian do not scale or, if they do, they are very slow for modern applications. Here, we propose Simba, a scalable preconditioned gradient method, to address the main limitations of the first-order methods. The method is very simple to implement. It maintains a single precondition matrix that it is constructed as the outer product of the moving average of the gradients. To significantly reduce the computational cost of forming and inverting the preconditioner, we draw links with the multilevel optimization methods. These links enables us to construct preconditioners in a randomized manner. Our numerical experiments verify the scalability of Simba as well as its efficacy near saddles and flat areas. Further, we demonstrate that Simba offers a satisfactory generalization performance on standard benchmark residual networks. We also analyze Simba and show its linear convergence rate for strongly convex functions. 
\end{abstract}

\keywords{saddle free optimization \and preconditioned gradient methods \and coarse-grained models \and deep learning}

\section{Introduction}

We focus on solving the following optimization problem, commonly referred to as empirical risk minimization:
\begin{equation} \label{fine model}
    \min_{\w{x} \in \mathbb{R}^n} f(\w{x}) := \frac{1}{m} \sum_{i=1}^m f_i(\w{x}).
\end{equation}
We assume that the model parameters are available in a \textit{decoupled} form, that is, $\w{x} = \{\w{x}^l : l = 1, \ldots L \}$, for some positive integer $L$. The decoupled parameter setting naturally emerges for the training of a deep neural network (DNN) where $L$ corresponds to the number of layers. In large-scale optimization scenarios, stochastic gradient descent method (SGD) and its variants have become a standard tool due to their simplicity and low per-iteration and memory costs. While easy to implement, they are nevertheless accompanied with important shortcomings. Below, we discuss these limitations and argue that they mainly stem from the absence of the Hessian matrix (or an appropriate approximation that leverages its structural properties) in their iterative scheme.

\textbf{Shortfall I: Slow evasion of saddle points.} 
The prevalence of saddle points in high dimensional neural networks has been examined in the past and it has been shown that the number of saddles increases exponentially with the dimension $n$ \cite{dauphin2014identifying}. Consequently, developing algorithms that are capable of escaping saddle points is a primary goal when it comes to the training of DNNs. While stochastic first-order methods have theoretical guarantees of escaping saddle points almost always \cite{panageas2019first}, it remains unclear whether they can achieve this efficiently in practice. As a result, given the significantly larger number of saddle points compared to local minima in high-dimensional settings, it is likely that stochastic first-order methods will converge to a saddle point rather than the desired local minimum.
The most natural way to ensure convergence to a local minimum in the presence of saddles is by incorporating the Hessian matrix to scale the gradient vector. The Cubic Newton \cite{nesterov2018lectures} is a key method in non-convex optimization known to be able to escape saddles in a single iteration resulting in rapid convergence to a local minimum. The Cubic Newton method achieves this using the inverse of a regularized version of the Hessian matrix to premultiply the gradient vector. However, the method has expensive iterates and does not scale in high dimensions. Nevertheless, its theory indicates that methods employing accurate approximations of the regularized Hessian are expected to effectively evade saddle points and converge rapidly.

\textbf{Shortfall II: Slow convergence rate near flat areas and the vanishing gradient issue.} 
The convergence behaviour of first-order methods can be significantly affected near local minima that are surrounded by plateaus \cite{MR2061575, MR2142598}. For instance, this occurs, when its Hessian matrix has most of the eigenvalues equal to zero. Therefore,  to effectively navigate on such a flat landscape is to employ the inverse Hessian matrix. For instance, the Newton method pre-multiplies the gradient with the inverse Hessian to perform a local change of coordinates which significantly accelerates the convergence rate. Intuitively, one can expect a similar convergence behaviour by replacing the Hessian matrix with a preconditioned matrix that retains its structure. On the other hand,
the vanishing gradient issue is commonly observed in the training of DNNs for which activation functions that range in $[0,1]$ are required. In such cases, the gradient value computed through the back-propagation decreases exponentially as the number of layers $L$ of the network increase yielding no progress for the SGD. Hence, similar to addressing the slow-convergence issue near flat areas, employing a preconditioner is expected to mitigate the vanishing gradient issue and thus accelerate the convergence of the SGD method. 

To tackle the above shortfalls, the diagonal (first- or second-order) methods were introduced \cite{duchi2011adaptive, tieleman2012lecture, zeiler2012adadelta, kingma2014adam, zaheer2018adaptive, yao2021adahessian, ma2020apollo, jahani2021doubly, liu2023sophia}. These algorithms aim to improve the behavior of the SGD near saddles or flat areas by preconditioning the gradient with a diagonal vector that mimics the hessian matrix. However, the Hessian matrices of deep neural networks need not be sparse, which means that a diagonal approximation of the Hessian may be inadequate. This raises concerns about the ability of diagonal methods to converge to a local minimum. A more promising class of algorithms consists of those preconditioned gradient methods that maintain a matrix instead of the potentially poor diagonal approximation. As noted earlier, the Cubic Newton method can effectively address the aforementioned shortfalls but it does not scale in large-scale settings. Other methods that explicitly use or exploit the Hessian matrix, such as Newton or Quasi-Newton methods \cite{MR2142598, broyden1967quasi}, or those based on randomization and sketching \cite{erdogdu2015convergence, pilanci2017newton, xu2016sub, xu2020second} still suffer from the same limitations as the Cubic Newton. A potentially promising way to address the storage and computational limitations of the Newton type methods is to employ precondition matrices based on the first-order information \cite{duchi2011adaptive, gupta2018shampoo}. However, to the best of our knowledge, the performance of such methods near saddles or flat areas is yet to be examined. Another possible solution is to utilize the multilevel framework in optimization which significantly reduces the cost of forming and inverting the Hessian matrix \cite{tsipinakis2021multilevel, tsipinakis2023multilevel}. But, it remains unclear whether the multilevel Newton-type methods can be efficiently applied to the training of modern deep neural networks.


\begin{algorithm}[t]
\caption{Simba. Proposed default hyper-parameters: $\w{R}_k$ as in Definition \ref{def: P}, $t_k = 10^{-2}, b=0.9, r=20, n_\ell = 0.5n, m = 10^{-8}$.}\label{alg: alg}
\begin{algorithmic}
\Require $t_k$ (step-size), $\beta \in (0,1)$ (momentum), $\w{x}_0$ - (initial point), $r$  (No of eigenvalues), $n_\ell$ (coarse model dimensions), $m$ (eigenvalue correction).
\State Set $\w{G}_0 = 0$
\For{$k = 1, 2, \ldots$}
\State $\w{G}_k = \beta \w{G}_{k-1} +  \nabla f(\w{x}_{k-1})$ (EMA of gradients)
\State $\w{G}_{\ell, k} = \w{R}_k \w{G}_k$ (coarse EMA of gradients)
\State $\w{Q}_{\ell,k} = \w{G}_{\ell, k} \w{G}_{\ell, k}^T$ (EMA-based preconditioner)
\State $\left[ \w{U}_{r+1} \w{\Lambda}_{r+1} \right] = \text{T-SVD}(\w{Q}_{\ell,k})$ by \cite{MR2806637}
\State       $ \left[  \w{\Lambda}_{r+1} \right]_{i, m} = \begin{cases} \w{\Lambda}_{i}, & \w{\Lambda}_{i} \geq m \\
            m, & \text{otherwise}
            \end{cases}, \quad $ and form $\w{\Lambda}_{r+1}^m $
\State $\w{Q}_{\ell,k}^{-\frac{1}{2}} = \left[\w{\Lambda}_{r+1} \right]_{r+1, m}^{-\frac{1}{2}}  \w{I}_{n_\ell} +  \w{U}_r \left(  \left[\w{\Lambda}_{r+1}^m \right]^{-\frac{1}{2}} - \left[\w{\Lambda}_{r+1} \right]_{r+1, m}^{-\frac{1}{2}} \w{I}_r \right) \w{U}_r^T$
\State $\w{x}_{k+1} = \w{x}_k - t_k \w{P}_k \w{Q}_{\ell,k}^{-\frac{1}{2}} \w{R}_k \w{G}_k$
\EndFor
\end{algorithmic}
\end{algorithm}

In this work, we introduce Simba, a \textbf{S}calable \textbf{I}terative \textbf{M}inimization \textbf{B}ilevel \textbf{A}lgorithm that addresses the above shortfalls and scales to high dimensions. The method is very simple to implement. It maintains a precondition matrix where its inverse square root is used to premultiply the moving average of the gradient. In particular, the precondition matrix is defined as the outer product of the exponential moving average (EMA) of the gradients. We explore the link between the multilevel optimization methods and the preconditioned gradient methods which enables us to construct preconditioners in a randomized manner. By leveraging this connection, we significantly reduce the cost and memory requirements of these methods to make them more efficient in time for large-scale settings. We propose performing a Truncated Singular Value decomposition (T-SVD) before constructing the inverse square root preconditioner. Here, we retain only the first few, say $r$, eigenvalues and set the remaining eigenvalues equal to the $r^{\text{th}}$ eigenvalue. This approach aims to construct meaningful preconditioners that do not allow for zero eigenvalues, and thus expecting a faster escape rate from saddles or flat areas.  
Our algorithm is inspired by SigmaSVD \cite{tsipinakis2023multilevel}, Shampoo \cite{gupta2018shampoo} and SGD with momentum \cite{bubeck2015convex}. Simba is presented in Algorithm \ref{alg: alg}.  
A Pytorch implementation of Simba is available at \url{https://github.com/Ntsip/simba}.

\subsection{Related Work}
The works most closely related to ours is Shampoo \cite{gupta2018shampoo} and AdaGrad \cite{duchi2011adaptive}. However, both methods have important differences to our approach which are discussed below. AdaGrad is an innovative adaptive first-order method which performs elementwise division of the gradient with the square root of the accumulated squared gradient. Adagrad is known to be suitable for sparse settings. However, the version of the method that retains a full preconditioner matrix is rarely used in practice due to the increased computational cost. Moreover, Shampoo, unlike our approach, maintains multiple preconditioners, one for each tensor dimension (i.e., two for matrices while we always use one). In addition, it computes a full SVD for each preconditioner. Further, all preconditioners lie in the original space dimensions. Given the previous considerations, Shampoo becomes computationally expensive when applied to very large DNNs. In contrast, our algorithm addresses the computational issues of Shampoo. Another difference between Shampoo and our approach is the way the preconditioners are defined.  Shampoo defines the preconditioners as a sum of the outer product of gradients, whereas we employ the exponential moving average. In addition, the performance of Shampoo around saddles and flat areas is yet to be examined. We conjecture that Shampoo may not effectively improve the behaviour of the first-order methods near plateaus due to the monotonic increase of the eigenvalues in the preconditioners.

Hessian-based preconditioned methods have been also proposed to improve convergence of the optimization methods near saddle point \cite{reddi2018generic, dauphin2014identifying, o2020low}. However, these methods rely on second-order information which can be prohibitively expensive for very large DNNs. Therefore, they are more suitable to apply in conjunction with a fast first-order method and employ the Hessian approximation when first-order methods slow down. However, our method is efficient from the beginning of the training. A notable second-order optimization method is K-FAC \cite{martens2015optimizing}. K-FAC approximates the Fisher information matrix of each layer using the Kronecker product of two smaller matrices that are easier to handle. However, K-FAC is specifically designed for the training of generative (deep) models while our method is general and applicable to any stochastic optimization regime.

As far as the adaptive diagonal methods are concerned, Adam and RMSprop were introduced to alleviate the limitations of AdaGrad. RMSprop scales the gradient using an averaging of the squared gradient. On the other hand, Adam scales the EMA of the gradient by the EMA of the squared gradient. Adam iterations has been shown to significantly improve the performance of first-order diagonal methods in the optimization of DNNs. Another method, named Yogi, has similar updates as AdaGrad but allows for the accumulated squared root not be monotonically increasing. To provide EMA of the gradient with a more informative scaling, several approaches have been proposed that replace the squared gradient with an approximation of the diagonal of the Hessian matrix \cite{jahani2021doubly, yao2021adahessian, ma2020apollo}. AdaHessian \cite{yao2021adahessian} approximates the Hessian matrix by its diagonal matrix using the Hutchinson's method while Apollo \cite{ma2020apollo} based on the variational technique using the weak secant equation. Further, OASIS \cite{jahani2021doubly} is an adaptive modification of AdaHessian that automatically adjusts learning rates. A recent work called Sophia \cite{liu2023sophia} modifies AdaHessian direction by clipping it by a scalar. The authors also provide alternatives on the diagonal Hessian matrix approximation which they suggest to compute every $10$ iterations to reduce the computational costs. However, it is still unclear whether the diagonal methods offer satisfactory performance near saddles points and flat areas, or if they effectively tackle the vanishing gradient issue in practical applications.

\subsection{Contributions}

The main contribution of this paper is the development of a scalable method that addresses the aforementioned shortfalls by empirical evidence. The method is scalable for training DNNs as it maintains only one preconditioned matrix at each iteration which lies in the subspace (coarse level). Thus, we significantly reduce the cost of forming and computing the SVD compared to Shampoo. The fact that we use a randomized T-SVD and requiring the most informative eigenvalues further reduces the total computational cost. The numerical experiments demonstrate that
Simba can have cheaper iterates than AdaHessian and Shampoo in large-scale settings and deep architectures. In particular, in these scenarios, the wall-clock time of our method can be as much as $25$ times less than Shampoo and $2$ times less than AdaHessian. We, in addition, illustrate that Simba offers comparable, if not better, generalization errors against the state-of-the-art methods in standard benchmark ResNet architectures.

Further, the numerical experiments show that our method has between two and three times more expensive iterates than Adam. This is expected due to outer products and the randomized T-SVD. However, this is a reasonable price to pay to improve the escape rate from saddles and flat areas of the existing preconditioned gradient methods. Therefore, Simba is suitable in problems where diagonal methods suffer from at least of one of the previous shortfalls. In such cases, we demonstrate that diagonal methods require much larger wall-clock time to reach an error as low as our method achieves. Hence, we argue that, besides the encouraging preliminary empirical results achieved by Simba, this paper highlights the limitations of diagonal methods, which are likely to get trapped near saddle points and thus return sub-optimal solution, particularly in the presence of large plateaus. This emphasizes the need to develop scalable algorithms that utilize more sophisticated preconditioners instead of relying on poor diagonal approximations of the Hessian matrix. Simba is accompanied with a simple convergence analysis assuming strongly convex functions.

\section{Description of the Algorithm}

In this section, we begin by discussing the standard Newton-type multilevel method and its main components, i.e., the hierarchy of coarse models and the linear operators that will used to transfer information from coarse to fine model and vice versa. Then, as all the necessary ingredients are in place, we present the proposed algorithm. Even though, we assume a bilevel hierarchy, extending Simba to several level is straightforward.

\subsection{Background Knowledge - Multilevel Methods}

We will be using the multilevel terminology to denote $f$ as the \emph{fine} model and also assume that a \emph{coarse} model that lies in lower dimensions is available. In particular, the coarse model is a maping $F: \R^{n_\ell} \rightarrow \R$, where $n_\ell < n$. The subscript $\ell$ will be used to denote quantities, i.e., vectors and scalars, that belong to the coarse level. We assume that both fine and coarse models are bounded from below so that a minimizer exists.

In order to reduce the computational complexity of Newton type methods, the standard multilevel method attempts to solve \eqref{fine model} by minimizing the coarse model to obtain search directions. This process is described as follows. First, one needs to have access to linear prolongation and restriction operators
to transfer information from the fine to coarse model and vice versa. We denote $\w{P} \in \R^{n \times n_\ell}$ be the prolongation and $\w{R} \in \R^{n_\ell \times n}$ the restriction operator and assume that they are full rank and $\w{P} = \w{R}^T$. Given a point $\w{x}_k$ at iteration $k$, we move to the coarse level with initialization point $\w{y}_0 = \w{R} \w{x}_k$. Subsequently, we minimize the coarse model $F$ to obtain $\w{y}^*$ and construct a search direction by:
\begin{equation} \label{eq: coarse model}
    \w{d}_k := \w{P}_k (\w{y}^* - \w{y}_0).
\end{equation}
To compute the search directions effectively, the standard multilevel method considers the Galerkin model as a choice of the coarse model:
\begin{equation}
\begin{aligned}
F(\w{y})& := \langle \w{R}_k \nabla  f(\w{x}_k), \w{y} -\w{y}_0 \rangle  +
\frac12 \langle \w{R}_k\nabla^2 f(\w{x}_k)\w{P}_k (\w{y} -\w{y}_0),\w{y} -\w{y}_0 \rangle.
\end{aligned}
\label{eq:galerkinModel}
\end{equation}
It can be shown that combining \eqref{eq: coarse model} and \eqref{eq:galerkinModel} we obtain a closed form solution for the coarse direction:
\begin{equation*}
    \w{d}_k = - \w{P}_k \left( \w{R}_k\nabla^2 f(\w{x}_k)\w{P}_k \right)^{-1} \w{R}_k\nabla f(\w{x}_k)
\end{equation*}
However, it may be ineffective to employ solely the above strategy when computing the search direction if $\w{R}$ is fixed or defined in a deterministic fashion at each iteration. For instance, this can occur when $\nabla f(\w{x}) \in \operatorname{null} (\w{R})$ while $\| \nabla f(\w{x}_k) \|_2 \neq 0$, which implies no progress for the multilevel algorithm. The following conditions were introduced to prevent the multilevel algorithm from taking an ineffective coarse step:
\begin{equation*}
        \| \w{R} \nabla f (\w{x}) \| > \xi  \| \nabla f (\w{x}) \| \quad \text{and}
        \quad \| \w{R} \nabla f (\w{x}) \| > e,
\end{equation*}
where $\xi \in (0, \min (1, \| \w{R} \|))$ and $e>0$ are user defined parameters. Thus, the standard multilevel algorithm computes the coarse direction when the above conditions are satisfied and the fine direction (Newton) otherwise. On the other hand, it is expected that the multilevel algorithm will construct effective coarse directions when $\w{R}$ is selected randomly at each iteration. In particular, it has been demonstrated that the standard multilevel method can perform always coarse steps without compromising its superlinear or composite convergence rate (see for example \cite{ho2019newton, tsipinakis2021multilevel, tsipinakis2023multilevel}). 

\subsection{Simba}

The standard multilevel algorithm is well suited for strictly convex functions since the Hessian matrix is always positive definite. Due to the absence of positive-definiteness in general non-convex problems, Newton type methods may converge to a saddle point or a local maximum since they cannot guarantee the descent property of the search directions. They can even break down when the Hessian matrix is singular. These limitations have been efficiently tackled in the recent studies \cite{nesterov2006cubic}. In multilevel literature, SigmaSVD \cite{tsipinakis2023multilevel} addresses these limitations by using a truncated low-rank modification of the standard multilevel algorithm that efficiently escapes saddle points and flat areas. While SigmaSVD has been shown to perform well near saddles and flat areas, it requires $\mathcal{O}(m n_\ell^2)$ operations to form the reduced Hessian matrix which is prohibitively expensive for modern deep neural network models. Here, to alleviate this burden, we replace the expensive Hessian matrix with the outer product of the gradients. For this purpose, we consider the following coarse model:
\begin{equation}
\begin{aligned}
F(\w{y}) & := \langle \w{R}_k \w{G}_k,  \w{y} -\w{y}_0 \rangle +
\frac12 \langle (\w{R}_k \w{H}_k \w{P}_k)^{\frac{1}{2}} (\w{y} -\w{y}_0),\w{y} -\w{y}_0 \rangle,
\end{aligned}
\label{eq:coarseModelNew}
\end{equation}
where $\quad \w{H}_k := \w{G}_k \w{G}_k^T$ and  $\w{G}_k := \nabla f (\w{x}_k)$.
Thus, by defining $\hat{\w{d}}_\ell := \w{y}^* - \w{y}_0$ and $\w{Q}_{\ell, k} := \w{R}_k \w{H}_k \w{P}_k$, the coarse direction can be computed explicitly:
\begin{equation*}
\begin{aligned}
    \hat{\w{d}}_k & = \w{P}_k \left(  \underset{\w{d}_\ell \in\R^{n_\ell} }{\operatorname{arg \ min}}  \langle \w{R}_k \w{G}_k,  \w{d}_\ell \rangle +
\frac12 \langle \w{Q}_{\ell, k}^{\frac{1}{2}} \w{d}_\ell ,\w{d}_\ell \rangle \right) \\ 
& = - \w{P}_k \w{Q}_{\ell, k}^{-\frac{1}{2}}  \w{R}_k \w{G}_k.
\end{aligned}
\end{equation*}
To construct $\w{Q}_{\ell, k}^{-\frac{1}{2}}$, we perform a randomized truncated SVD to obtain the first $r+1$ eigenvalues and eigenvectors, that is, $\left[ \w{U}_{r+1}, \w{\Lambda}_{r+1} \right] = \text{T-SVD}(\w{Q}_{\ell,k})$ \cite{MR2806637}. Subsequently, we bound the diagonal matrix of eigenvalues from below by a scalar $m>0$:
\begin{equation} \label{eq: truncated eig matrix}
    \left[  \w{\Lambda}_{r+1} \right]_{i, m} := \begin{cases} \w{\Lambda}_{i}, & \w{\Lambda}_{i} \geq m \\
            m, & \text{otherwise}, 
            \end{cases}
\end{equation}
to obtain the new modified diagonal matrix as $\w{\Lambda}_{r+1}^m$. We then construct $\w{Q}_{\ell, k}^{-\frac{1}{2}}$ by treating all its eigenvalues below the $r$-th  equal to $r+1$ eigenvalue. Formally, we define
\begin{equation} \label{eq: Q_l inverse}
\begin{aligned}
    \w{Q}_{\ell,k}^{-\frac{1}{2}} & := \left[\w{\Lambda}_{r+1} \right]_{r+1, m}^{-\frac{1}{2}}  \w{I}_{n_\ell} +  \w{U}_r \left(  \left[\w{\Lambda}_{r+1}^m \right]^{-\frac{1}{2}} - \left[\w{\Lambda}_{r+1} \right]_{r+1, m}^{-\frac{1}{2}} \w{I}_r \right) \w{U}_r^T.
\end{aligned}
\end{equation}
The key component when forming the precondition matrix as above is that we do not allow for zero eigenvalues since zero eigenvalues indicate the presence of flat areas where the convergence rate of optimization methods significantly decays. Hence, by requiring $\w{Q}_{\ell,k} \succeq  m \w{I}_{n_\ell}$, we ensure that directions which correspond to flat curvatures will turn into directions whose curvature is positive, anticipating an accelerated convergence to the local minimum or a fast escape rate from saddles.  Similar techniques for constructing the preconditioners have been employed in \cite{tsipinakis2023multilevel, paternain2019newton} where it is demonstrated numerically that the algorithms can rapidly escape saddle points and flat areas in practical applications. The computational cost of forming the preconditioner matrix is $\mathcal{O}(n_\ell^2)$ which is significantly smaller than computing the Hessian matrix. Moreover, the randomized SVD requires $\mathcal{O}(r n_\ell^2)$ operation. 

In addition, the method can be trivially modified to employ the EMA of the gradients to account for the averaged history of the local information. Moreover, the EMA of the gradient is used to construct more informative preconditioner matrices. Specifically, to obtain the accelerated algorithm, we set a momentum parameter $\beta \in (0, 1)$ and replace $\w{G}_k$ in \eqref{eq:coarseModelNew} with the EMA update:
\begin{equation*}
   \w{G}_k := \beta \w{G}_{k-1} +  \nabla f (\w{x}_k),
\end{equation*}
where $\w{G}_0 = 0$ and $k \geq 1$. Simba with momentum is presented in Algorithm \ref{alg: alg}.

We emphasize that the method is scalable for solving large deep neural network models. We describe the three components that render the iterations of Simba efficient: \textbf{(a)} The operations can be performed in a decoupled manner. This means that preconditioners can be computed independently at each layer, leading into forming much smaller matrices that can be efficiently stored during the training. \textbf{(b)} The restriction operator is constructed based on uniform sampling without replacement from the rows of the identity matrix, $\w{I}_n$. This is described formally in the following definition: \begin{definition} \label{def: P}
\emph{ Let the set $S_n = \{1,2, \ldots, n\}$. Sample uniformly without replacement $n_\ell < n$ elements from $S_n$ to construct $S_{n_\ell} = \{s_{n_1}, s_{n_2}, \ldots, s_{n_\ell}\}$. Then, the $i^\text{th}$ row of $\w{R}$ is the $s_{n_i}^\text{th}$ row of the identity matrix $\w{I}_n$ and $\w{P}= \w{R}^T$}.    
\end{definition} 
This way of constructing the restriction operator yields efficient iterations since in practice the coarse model in \eqref{eq:coarseModelNew} can be formed by merely sampling $n_\ell$ elements from $\w{x}_k$ and  $\nabla f (\w{x}_k)$ which has negligible computational cost. \textbf{(c)} Taking advantage of the parameter structure. Methods that employ full matrix preconditioners further reduce the memory requirement by exploiting the matrix or tensor structure of the parameters (for details see Shampoo \cite{gupta2018shampoo}). For instance, for a matrix setting, if the parameter $\w{X} \in \R^{q \times d}$ and select $\w{R} \in \R^{n_\ell \times q}$, where $n_\ell < q$, then we obtain $\w{G}_{\ell, k} := \w{R} \w{G}_{k} \in \R^{n_\ell \times d}$ and $\w{Q}_{\ell,k} \in \R^{n_\ell \times n_\ell}$. This results in $\mathcal{O}((r+d) n_\ell^2)$ operations for forming the preconditioner and applying the randomized SVD. Note that this number is much smaller than $\mathcal{O}((d+q)(d^2 + q^2))$ of Shampoo.
\begin{figure*}[t]
\centering
\begin{subfigure}{.32\textwidth}
\centering
  \includegraphics[width=.98\linewidth]{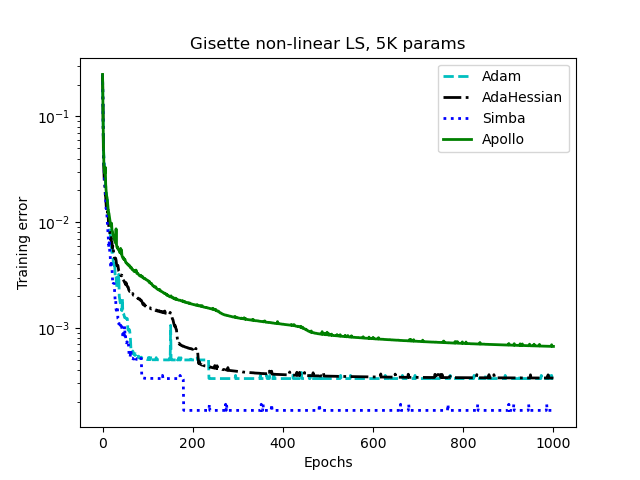}  
  \caption{Initialized at the origin}
  \label{subfig: nlls conv rate gisette}
\end{subfigure}
\begin{subfigure}{.32\textwidth}
\centering
  \includegraphics[width=.98\linewidth]{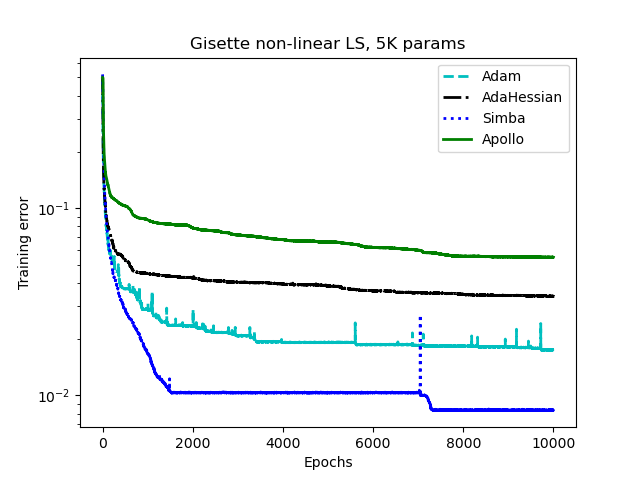}  
  \caption{Initialized randomly from $\mathcal{N}(0,1)$}
  \label{}
\end{subfigure}
\begin{subfigure}{.32\textwidth}
\centering
  \includegraphics[width=.98\linewidth]{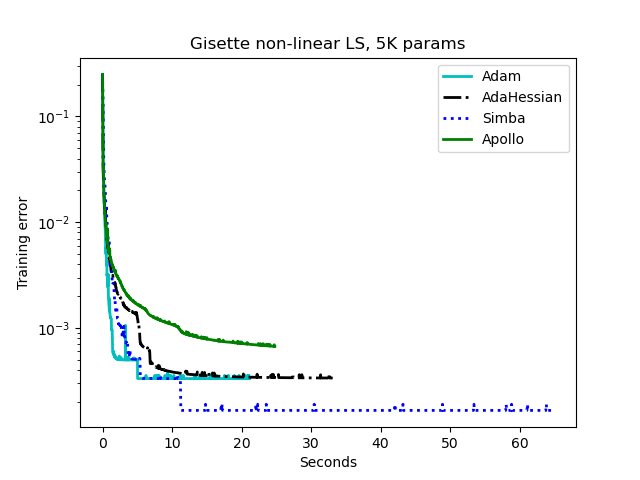}  
  \caption{Gissete with $\w{x}_0 = 0$}
  \label{subfig: gpu time nlls gisette}
\end{subfigure}
\caption{Convergence behaviour of various algorithms for the non-linear
least-squares problem. }
\label{fig: nlls algs}
\end{figure*}
\section{Convergence Analysis}

In this section we provide a simple convergence analysis of Simba
when it generates sequences using the coarse model in \eqref{eq:coarseModelNew} (without momentum). We show a linear convergence rate when the coarse model is constructed in both deterministic and randomized manner.
Our analysis is based on the classical theory that assumes strongly convex functions and Lipschitz continuous gradients. In deterministic scenarios, the method is expected to alternate between coarse and fine steps to always reduce the value function. For this reason, we present two convergence results: \textbf{(a)} when the coarse step is always accepted, and \textbf{(b)} when only fine steps are taken. Hence the complete convergence behaviour of Simba is provided. On the other hand, when the prolongation operator is defined randomly at each iteration, multilevel methods expected to converge using coarse steps only \cite{ho2019newton, tsipinakis2021multilevel, tsipinakis2023multilevel}. Our numerical experiments also verify this observation. Moreover, we derive the number of steps needed for the method to reach accuracy $\epsilon>0$ for both cases. We begin by stating our assumptions.

\begin{assumption}\label{ass: f}
There are scalars $0<\mu<L< +\infty$ such that $f$ is $\mu$-strongly convex and has $L$-Lipschitz continuous gradients if and only if:
\begin{enumerate}
    \item ($L$-Lipschitz continuity) for all $\w{x},\w{y} \in \R^n$
    \begin{equation*}
        \| \nabla f(\w{y}) - \nabla f(\w{x}) \|_2 \leq L \| \w{y} - \w{x}\|_2
    \end{equation*}
    \item ($\mu$-strong convexity) for all $\w{x},\w{y} \in \R^n$
    \begin{equation*}
        f(\w{y}) \geq f(\w{x}) + \langle \nabla f(\w{x}), \w{y} - \w{x} \rangle + \frac{\mu}{2} \|\w{y} - \w{x} \|_2
    \end{equation*}
\end{enumerate}
\end{assumption}

Next we state the assumptions on the prolongation and restriction operators.
\begin{assumption} \label{ass: P}
    For the restriction and prolongation operators $\w{R}$ and $\w{P}$ it holds that $\w{P} = \w{R}^T$ and $\operatorname{rank}(\w{P}) = n_\ell$.
\end{assumption}
The assumptions on the linear operators are not restrictive for practical application. For instance, Definition \ref{def: P} on $\w{P}$ and $\w{R}$ satisfies Assumption \ref{ass: P}. The following assumption ensures that the algorithm always selects effective coarse directions.

\begin{assumption} \label{ass: coarse}
    There exist $e>0$ and $\xi \in (0, \min (1, \| \w{R} \|_2))$ such that if $\| \nabla f (\w{x}) \|_2 \neq 0$ it holds
    \begin{equation*}
        \| \w{R} \nabla f (\w{x}) \|_2 > \xi  \| \nabla f (\w{x}) \|_2 \quad \text{and}
        \quad \| \w{R} \nabla f (\w{x}) \|_2 > e.
    \end{equation*}
\end{assumption}
For our convergence result below we will need the following quantity: we denote $\omega := \max \{\| \w{R}\|_2, \| \w{P}\|_2  \}$.

\begin{theorem} \label{theorem: coarse steps}
Let $f: \R^n \rightarrow \R$ be a function such that Assumption \ref{ass: f} holds. Suppose also that Assumptions \ref{ass: P} and \ref{ass: coarse} hold. Moreover, given $\w{Q}_{\ell,k}^{-\frac{1}{2}}$ in \eqref{eq: Q_l inverse}, define
    \begin{align*}
        \hat{\w{d}}_{k} & := - \w{P}_k \w{Q}_{\ell,k}^{-\frac{1}{2}} \w{R}_k \nabla f (\w{x}_k),
    \end{align*}
and suppose that the sequence $ (\w{x}_k)_{k \in \mathbb{N}}$ is generated by $\w{x}_{k+1} = x_{k} + \frac{\xi^2 m}{L \sqrt{M} \omega^4} \hat{\w{d}}_{k}$.
Then, there exists $\hat{c} \in (0,1)$ such that 
\begin{equation*}
    f(\w{x}_{k+1}) - f(\w{x}^*) \leq \hat{c} (f(\w{x}_{k}) - f(\w{x}^*)).
\end{equation*}
Moreover, at most
\begin{equation*}
    \hat{K} = \frac{\log\left( f(\w{x}_0) - f(\w{x}^*) / \epsilon  \right)}{\log(1/\hat{c})}
\end{equation*}
iterations are required for this process to reach accuracy $\epsilon$. 
\end{theorem}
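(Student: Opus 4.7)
The plan is to proceed in the standard way for preconditioned gradient descent on strongly convex, $L$-smooth functions, but with extra care in bounding the spectrum of $\mathbf{Q}_{\ell,k}^{-1/2}$ from both sides using the truncation by $m$ from below and (implicitly) by some $M>0$ from above. First, I would read off from the construction in \eqref{eq: Q_l inverse} and \eqref{eq: truncated eig matrix} that the eigenvalues of $\mathbf{Q}_{\ell,k}^{-1/2}$ lie in $[1/\sqrt{M},1/\sqrt{m}]$, so
\[
\tfrac{1}{\sqrt{M}}\mathbf{I}_{n_\ell} \preceq \mathbf{Q}_{\ell,k}^{-\frac12} \preceq \tfrac{1}{\sqrt{m}}\mathbf{I}_{n_\ell}.
\]
This is the key structural fact that makes the direction $\hat{\mathbf{d}}_k$ usable as a descent direction: without the lower truncation by $m$, there would be no upper bound on the step size, and without an upper bound $M$ on the spectrum of $\mathbf{Q}_{\ell,k}$, there would be no curvature lower bound on the preconditioner.

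Next, I would establish the descent inequality. Using $\mathbf{P}_k=\mathbf{R}_k^{T}$ (Assumption \ref{ass: P}) and the lower spectral bound,
\[
\langle \nabla f(\mathbf{x}_k),\hat{\mathbf{d}}_k\rangle
= -\langle \mathbf{R}_k\nabla f(\mathbf{x}_k),\mathbf{Q}_{\ell,k}^{-\frac12}\mathbf{R}_k\nabla f(\mathbf{x}_k)\rangle
\le -\tfrac{1}{\sqrt{M}}\|\mathbf{R}_k\nabla f(\mathbf{x}_k)\|_2^2,
\]
and then invoking the first part of Assumption \ref{ass: coarse} to replace $\|\mathbf{R}_k\nabla f(\mathbf{x}_k)\|_2^2$ by $\xi^2\|\nabla f(\mathbf{x}_k)\|_2^2$. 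Symmetrically, $\|\hat{\mathbf{d}}_k\|_2^2\le \omega^4 m^{-1}\|\nabla f(\mathbf{x}_k)\|_2^2$ by using the upper spectral bound on $\mathbf{Q}_{\ell,k}^{-1/2}$ together with $\|\mathbf{P}_k\|_2,\|\mathbf{R}_k\|_2\le\omega$.

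With these two bounds in hand, I would apply the standard $L$-smoothness descent lemma at $\mathbf{x}_{k+1}=\mathbf{x}_k+t\hat{\mathbf{d}}_k$ with $t=\tfrac{\xi^{2}m}{L\sqrt{M}\omega^{4}}$, which gives
\[
f(\mathbf{x}_{k+1})-f(\mathbf{x}_k)\le -t\,\tfrac{\xi^{2}}{\sqrt{M}}\|\nabla f(\mathbf{x}_k)\|_2^{2}+\tfrac{Lt^{2}}{2}\,\tfrac{\omega^{4}}{m}\|\nabla f(\mathbf{x}_k)\|_2^{2}.
\]
Plugging the chosen $t$ into this expression, the two terms telescope to $-\tfrac{\xi^{4}m}{2LM\omega^{4}}\|\nabla f(\mathbf{x}_k)\|_2^{2}$, and the Polyak--Lojasiewicz consequence $\|\nabla f(\mathbf{x}_k)\|_2^{2}\ge 2\mu(f(\mathbf{x}_k)-f(\mathbf{x}^{*}))$ of $\mu$-strong convexity produces the claimed contraction with $\hat{c}:=1-\tfrac{\mu\xi^{4}m}{LM\omega^{4}}\in(0,1)$. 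Iterating the contraction and taking logarithms yields the complexity bound $\hat{K}$.

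The main obstacle I anticipate is not the chain of inequalities above, which is routine, but rather making the role of $M$ rigorous: the step size features $\sqrt{M}$, so the proof must either invoke a (possibly uniform) upper bound on the spectrum of $\mathbf{Q}_{\ell,k}=\mathbf{R}_k\mathbf{G}_k\mathbf{G}_k^{T}\mathbf{P}_k$ or fold it into the assumptions. Since $\mathbf{Q}_{\ell,k}$ is rank one with nonzero eigenvalue $\|\mathbf{R}_k\nabla f(\mathbf{x}_k)\|_2^{2}\le\omega^{2}\|\nabla f(\mathbf{x}_k)\|_2^{2}$, and $\nabla f$ is bounded on any sublevel set of $f$ by $L$-Lipschitzness plus strong convexity, one can legitimately take $M$ to be a finite constant depending on $\omega,L,\mu$ and $f(\mathbf{x}_0)-f(\mathbf{x}^{*})$; verifying that the iterates remain in this sublevel set (which follows from the monotone decrease derived above) closes the argument.
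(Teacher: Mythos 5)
Your proposal is correct and follows essentially the same route as the paper's proof: spectral bounds $\tfrac{1}{\sqrt{M}}\w{I}\preceq \w{Q}_{\ell,k}^{-1/2}\preceq \tfrac{1}{\sqrt{m}}\w{I}$, the bounds $\|\hat{\w{d}}_k\|_2 \le \tfrac{\omega^2}{\sqrt{m}}\|\nabla f(\w{x}_k)\|_2$ and $-\langle\nabla f(\w{x}_k),\hat{\w{d}}_k\rangle \ge \tfrac{\xi^2}{\sqrt{M}}\|\nabla f(\w{x}_k)\|_2^2$ (via Assumption \ref{ass: coarse}), the $L$-smoothness descent lemma with the prescribed step size, and the Polyak--Łojasiewicz consequence of strong convexity, yielding the same contraction factor $\hat{c}=1-\tfrac{\mu\xi^4 m}{LM\omega^4}$. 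One small improvement over the paper: your final paragraph makes the existence of the uniform spectral upper bound $M$ on $\w{Q}_{\ell,k}$ genuinely rigorous by observing that $\w{Q}_{\ell,k}$ is rank one with nonzero eigenvalue $\|\w{R}_k\nabla f(\w{x}_k)\|_2^2\le\omega^2\|\nabla f(\w{x}_k)\|_2^2$ and that the gradient is bounded on the sublevel set, which stays invariant by the monotone decrease you derive; the paper merely asserts that $\w{Q}_{\ell,k}$ is bounded above ``by the Lipschitz continuity of the gradients,'' leaving this closing argument implicit.
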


\begin{proof}
Recall from the statement of the theorem that the sequence is generated by $\w{x}_{k+1} = \w{x}_{k} + t_k \hat{\w{d}}_k$, where $\hat{\w{d}}_k = - \w{P}_k \w{Q}_{\ell, k}^{-\frac{1}{2}}  \w{R}_k \nabla f(\w{x}_k)$, $t_k = \frac{\xi^2m}{\omega^4 L \sqrt{M}}$ and $\w{Q}_\ell^{-1/2}$ is defined in \eqref{eq: Q_l inverse}. We also define the following quantity
\begin{equation*}
    \hat{\lambda}({\w{x}}) :=  \sqrt{\nabla f(\w{x})^T \w{P} \w{Q}_{\ell}^{-\frac{1}{2}}  \w{R} \nabla f(\w{x})}.
\end{equation*}
It holds $\hat{\lambda}({\w{x}}) \geq 0$. Below, we collect two general results that will be useful later in the proof.
Given the Lipschitz continuity in Assumption \ref{ass: f} one can prove the following inequality (\cite{nesterov2018lectures}):
\begin{equation} \label{ineq: L-lipschitz}
    f(\w{y}) \leq f(\w{x}) + \nabla f(\w{x})^T (\w{y} - \w{x}) + \frac{L}{2} \| \w{y} - \w{x} \|_2
\end{equation}
Similarly, from the strong convexity we can obtain
\begin{equation*}
        f(\w{y}) \leq f(\w{x}) + \nabla f(\w{x})^T (\w{y} - \w{x}) + \frac{1}{2 \mu} \| \nabla f(\w{x}) - \nabla f(\w{y}) \|_2
\end{equation*}
Replacing $\w{y}$ and $\w{x}$ with $\w{x}_k$ and $\w{x}^*$, respectively, we have that
\begin{equation} \label{ineq: strong convexity}
        f(\w{x}_k) - f(\w{x}^*) \leq  \frac{1}{2 \mu} \| \nabla f(\w{x}_k) \|_2    
\end{equation}
Furthermore, by construction, $\w{Q}_{\ell,k}$ is bounded. It is bounded from below by $m$  from \eqref{eq: truncated eig matrix} and it is bounded from above by the Lipschitz continuity of the gradients. Then, there exists $M \geq m > 0$ such that for all $k \geq 1$ we have that
\begin{equation*}
    m \w{I} \preceq \w{Q}_{\ell,k} \preceq M \w{I}
\end{equation*}
The above inequality implies
\begin{equation*} \label{ineq: bounds on full preconditioner}
    \frac{1}{\sqrt{M}} \w{P}_k \w{R}_k \preceq \w{P}_k \w{Q}_{\ell,k}^{-\frac{1}{2}} \w{R}_k \preceq \frac{1}{\sqrt{m}} \w{P}_k \w{R}_k
\end{equation*}
Using the above inequality we can obtain the following bound
\begin{equation} \label{ineq: d_h upper bound}
\begin{aligned}
    \| \hat{\w{d}}_k \|_2 = \| \w{P}_k \w{Q}_{\ell, k}^{-\frac{1}{2}}  \w{R}_k \nabla f(\w{x}_k) \|_2 \leq \| \w{P}_k \w{Q}_{\ell, k}^{-\frac{1}{2}}  \w{R}_k \|_2 \|\nabla f(\w{x}_k)\|_2 \leq \frac{1}{\sqrt{m}} \| \w{P}_k  \w{R}_k \|_2 \|\nabla f(\w{x}_k)\|_2
    \leq \frac{\omega^2}{\sqrt{m}} \|\nabla f(\w{x}_k)\|_2.
\end{aligned}
\end{equation}
Similarly we can show a lower bound on $\hat{\lambda}({\w{x}})$
\begin{equation} \label{ineq: lambda lower bound}
        \hat{\lambda}({\w{x}})^2 =  \nabla f(\w{x})^T \w{P} \w{Q}_{\ell}^{-\frac{1}{2}}  \w{R} \nabla f(\w{x}) \geq \frac{1}{\sqrt{M}} \| \w{R} \nabla f(\w{x}) \|_2^2 \geq \frac{\xi^2}{\sqrt{M}} \| \nabla f(\w{x}_k) \|_2^2,
\end{equation}
where the last inequality follows from Assumption \ref{ass: coarse}.
Using now \eqref{ineq: L-lipschitz} and the fact that $\hat{\lambda}({\w{x}})^2 = - \nabla f(\w{x})^T \hat{\w{d}}_k$ we take
\begin{equation*}
    f(\w{x}_{k+1}) \leq f(\w{x}_{k}) - t_k \hat{\lambda}(\w{x}_k)^2 + t_k^2\frac{L}{2} \| \hat{\w{d}}_k \|^2_2.
\end{equation*}
Combining the above inequality with \eqref{ineq: lambda lower bound} and \eqref{ineq: d_h upper bound} we have that
\begin{equation*}
\begin{aligned}
    f(\w{x}_{k+1}) & \leq f(\w{x}_{k}) - t_k \frac{\xi^2}{\sqrt{M}} \| \nabla f(\w{x}) \|_2^2 + t_k^2\frac{L \omega^4}{2m} \| \nabla f(\w{x}_k) \|_2^2 \\
    & = f(\w{x}_{k}) - \frac{\xi^4 m}{2\omega^4 M L} \| \nabla f(\w{x}_k) \|_2^2,
\end{aligned}
\end{equation*}
where the last inequality follows by the definition of $t_k$. Adding and subtracting $f(\w{x}^*)$ on the above relationship and incorporating inequality \eqref{ineq: strong convexity} we obtain
\begin{equation*}
    f(\w{x}_{k+1}) - f(\w{x}^*) \leq \hat{c} (f(\w{x}_{k}) - f(\w{x}^*)),
\end{equation*}
where $\hat{c} := 1 - \frac{\xi^4 m \mu}{\omega^4 M L}$. Since $\xi < \omega, m < M$ and $\mu < L$ we take $\hat{c} \in (0, 1)$. Unravelling the last inequality we get
\begin{equation*}
    f(\w{x}_{k}) - f(\w{x}^*) \leq \hat{c}^k (f(\w{x}_{0}) - f(\w{x}^*)),
\end{equation*}
and thus $\lim_{k \rightarrow \infty} f(\w{x}_{k}) = f(\w{x}^*)$. Finally, solving for $k$ the inequality $\hat{c}^k (f(\w{x}_{0}) - f(\w{x}^*)) \leq \epsilon$, we conclude that at least
\begin{equation*}
    \hat{K} = \frac{\log\left( f(\w{x}_0) - f(\w{x}^*) / \epsilon  \right)}{\log(1/\hat{c})}
\end{equation*}
steps are required for this process to achieve accuracy $\epsilon$. 
\end{proof}

A direct consequence of the above theorem is convergence in expectation when $(\w{x}_k)_{k \geq 1}$ is generated randomly via a random prolongation matrix, e.g., see Definition \ref{def: P}. In this case we can guarantee that
\begin{equation*}
   \mathbb{E}[ f(\w{x}_{k})] - f(\w{x}^*) \leq \hat{c}^k (f(\w{x}_{0}) - f(\w{x}^*)),
\end{equation*}
which implies that $\lim_{k \to \infty} \mathbb{E}[ f(\w{x}_{k})] = f(\w{x}^*)$.
\begin{table}
\caption{Mean training error and standard deviation for the Non-Linear Least-Squares problem. The results were obtained over 5 runs for random initialization from $\mathcal{N}(0,1)$.}
\centering
\begin{tabular}{c|c}
\hline
Algorithm &  Mean $\pm$ Std\\
\hline
Adam & $0.0158 \pm 0.0016 $ \\
AdaHessian & $ 0.0325 \pm 0.0029 $ \\
Simba & \colorbox{BlueGreen}{$ 0.0097 \pm 0.001 $} \\
Apollo & $ 0.0556 \pm 0.0018 $ \\
\hline
\end{tabular}
\label{tab: nlls}
\end{table}
Theorem \ref{theorem: coarse steps} effectively shows the number of steps required for the method to reach the desired accuracy when the coarse direction is always effective. However, this may not be always true, which, in deterministic settings, implies no progress for the method. As discussed previously $\xi$ and $e$ can be viewed as user-defined parameters that prevent the method from taking the ineffective coarse steps.
Given fixed $\xi$ and $e$, the method performs iterations in the fine level if one of the two conditions in Assumption \ref{ass: coarse} is violated.
In the fine level, the method constructs the preconditioner as follows
\begin{equation*}
    \w{Q}_k := \nabla f (\w{x}_k) \nabla f (\w{x}_k)^T,
\end{equation*}
and then $\w{Q}_k^{-\frac{1}{2}}$ is constructed exactly as in $\w{Q}_{\ell, k}$ in \eqref{eq: Q_l inverse}.
The next theorem shows a linear rate and derives the number of steps required when the method takes only fine steps. 
\begin{theorem} \label{theorem: fine steps}
Let $f: \R^n \rightarrow \R$ be a function such that Assumption \ref{ass: f} holds. Suppose also that Assumption \ref{ass: P} holds. Moreover, define
    \begin{align*}
        \w{d}_{k} := - \w{Q}_k^{-\frac{1}{2}} \nabla f (\w{x}_k),
    \end{align*}
and suppose that the sequence $ (\w{x}_k)_{k \in \mathbb{N}}$ is generated by $\w{x}_{k+1} = x_{k} + \frac{m}{L \sqrt{M} } \hat{\w{d}}_{k}$.
Then, there exists $c \in (0,1)$ such that 
\begin{equation*}
    f(\w{x}_{k+1}) - f(\w{x}^*) \leq c (f(\w{x}_{k}) - f(\w{x}^*)).
\end{equation*}
Moreover, at most
\begin{equation*}
    K = \frac{\log\left( f(\w{x}_0) - f(\w{x}^*) / \epsilon  \right)}{\log(1/c)}
\end{equation*}
iterations are required for this process to reach accuracy $\epsilon$.
\end{theorem}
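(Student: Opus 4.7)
The plan is to mirror the proof of Theorem \ref{theorem: coarse steps} exactly, specializing to the ``fine'' case where $\w{R}=\w{P}=\w{I}_n$. In particular, this makes $\omega=\max\{\|\w{R}\|_2,\|\w{P}\|_2\}=1$ and replaces Assumption \ref{ass: coarse} with the trivial bound $\|\w{R}\nabla f(\w{x})\|_2=\|\nabla f(\w{x})\|_2$, so one may take $\xi=1$. Under this specialization, the step size $t_k=\tfrac{m}{L\sqrt{M}}$ stated in the theorem is precisely the instance of $\tfrac{\xi^2 m}{L\sqrt{M}\omega^4}$ from Theorem \ref{theorem: coarse steps}, which makes it clear that the fine-step result is just the $\omega=\xi=1$ reduction of the coarse-step result. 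I will therefore recycle the structure of that earlier argument verbatim, adapted to $\w{Q}_k=\nabla f(\w{x}_k)\nabla f(\w{x}_k)^T$ and $\w{d}_k=-\w{Q}_k^{-1/2}\nabla f(\w{x}_k)$.

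Concretely, I would first introduce $\lambda(\w{x}):=\sqrt{\nabla f(\w{x})^T \w{Q}^{-1/2}\nabla f(\w{x})}\geq 0$, observe that the truncation rule in \eqref{eq: truncated eig matrix} together with Assumption \ref{ass: f} gives $m\w{I}\preceq \w{Q}_k\preceq M\w{I}$ for the same $m,M$ as in Theorem \ref{theorem: coarse steps}, and hence $\tfrac{1}{\sqrt{M}}\w{I}\preceq \w{Q}_k^{-1/2}\preceq \tfrac{1}{\sqrt{m}}\w{I}$. From these spectral bounds I would derive the two standing inequalities used in the earlier proof: the descent-size bound $\|\w{d}_k\|_2\leq \tfrac{1}{\sqrt{m}}\|\nabla f(\w{x}_k)\|_2$ and the lower bound $\lambda(\w{x}_k)^2\geq \tfrac{1}{\sqrt{M}}\|\nabla f(\w{x}_k)\|_2^2$; neither requires Assumption \ref{ass: coarse} in the fine case.

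Next I would plug $\w{x}_{k+1}=\w{x}_k+t_k\w{d}_k$ into the descent lemma \eqref{ineq: L-lipschitz}, use $\nabla f(\w{x}_k)^T\w{d}_k=-\lambda(\w{x}_k)^2$, and substitute the two bounds just established to get
\begin{equation*}
f(\w{x}_{k+1})\leq f(\w{x}_k)-\frac{t_k}{\sqrt{M}}\|\nabla f(\w{x}_k)\|_2^2+\frac{t_k^2 L}{2m}\|\nabla f(\w{x}_k)\|_2^2.
\end{equation*}
Plugging in $t_k=\tfrac{m}{L\sqrt{M}}$ collapses this to $f(\w{x}_{k+1})\leq f(\w{x}_k)-\tfrac{m}{2ML}\|\nabla f(\w{x}_k)\|_2^2$. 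Applying the strong-convexity consequence \eqref{ineq: strong convexity} then yields
\begin{equation*}
f(\w{x}_{k+1})-f(\w{x}^*)\leq c\,\bigl(f(\w{x}_k)-f(\w{x}^*)\bigr),\qquad c:=1-\frac{m\mu}{ML},
\end{equation*}
and $c\in(0,1)$ since $m\leq M$ and $\mu<L$. Unrolling the recursion and solving $c^k(f(\w{x}_0)-f(\w{x}^*))\leq\epsilon$ for $k$ delivers the claimed iteration count $K$.

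There is no real obstacle here; the only point that needs a brief sentence is justifying that the very same $m,M$ bounds on $\w{Q}_{\ell,k}$ used in Theorem \ref{theorem: coarse steps} still apply to $\w{Q}_k$, which is immediate because $\w{Q}_k^{-1/2}$ is constructed by the exact same truncation recipe \eqref{eq: Q_l inverse} (lower-truncating eigenvalues at $m$ and leaving them bounded above by the Lipschitz constant). Everything else is an algebraic specialization of the earlier proof, so I would keep the writeup short and cross-reference Theorem \ref{theorem: coarse steps} wherever a step is mechanically identical.
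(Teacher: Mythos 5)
Your proposal is correct and matches the paper's (terse) proof, which simply states that the argument is analogous to Theorem~\ref{theorem: coarse steps} with the rate constant replaced by $c = 1 - \frac{m\mu}{ML}$. Your identification of the fine step as the $\w{R}=\w{P}=\w{I}_n$, $\omega=\xi=1$ specialization, together with the explicit descent-lemma computation, is exactly the intended reduction.
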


\begin{proof}
    The proof of theorem as it follows analogously to Theorem \ref{theorem: coarse steps}. The difference is the term that controls the linear rate which is now given by $c := 1 - \frac{m \mu}{M L}$. It holds $0 < \hat{c} \leq c < 1$.   
\end{proof}
As expected, theorem \ref{theorem: fine steps} shows a faster linear rate since the entire local information is employed during the training. Combining theorems \ref{theorem: coarse steps} and \ref{theorem: fine steps}, we provide the complete picture of the linear convergence rate of the proposed method.

\begin{figure*}
\centering
\begin{subfigure}{.24\textwidth}
\centering
  \includegraphics[width=.98\linewidth]{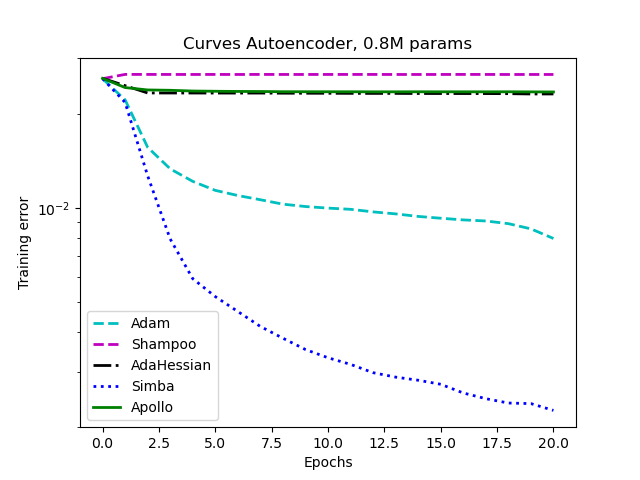}  
\end{subfigure}
\begin{subfigure}{.24\textwidth}
\centering
  \includegraphics[width=.98\linewidth]{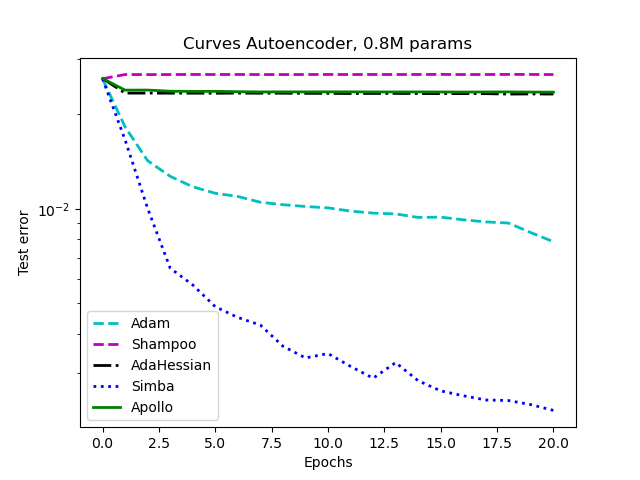}  
\end{subfigure}
\begin{subfigure}{.24\textwidth}
\centering
  \includegraphics[width=.98\linewidth]{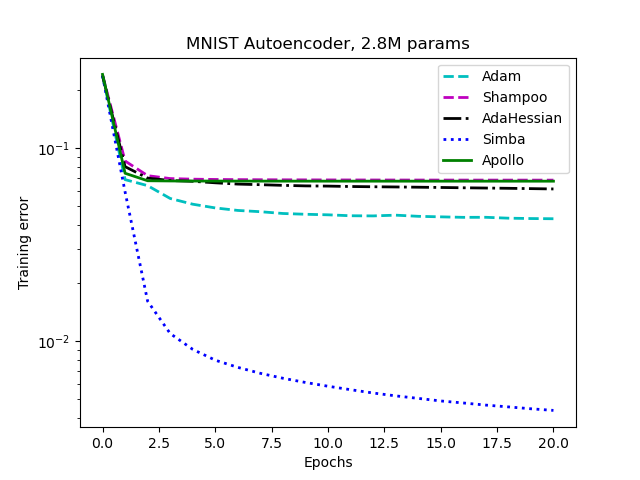}  
\end{subfigure}
\begin{subfigure}{.24\textwidth}
\centering
  \includegraphics[width=.98\linewidth]{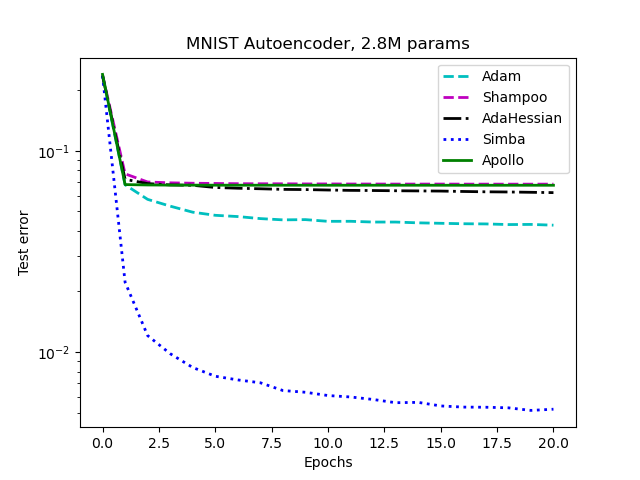}  
\end{subfigure}
\caption{Convergence behaviour of various algorithms for the CURVES and MNIST autoencoders. }
\label{fig: autoencoders}
\end{figure*}
\begin{table*}
\caption{Mean training error and standard deviation over $20$ and $5$ runs for the CURVES and MNIST autoencoders, respectively The last column reports the wall-clock time for a single run of the two autoencoders problem.}
\centering
\begin{tabular}{c|c|c|c}
\hline
Algorithm &  CURVES & MNIST & Seconds CURVES/MNIST\\
\hline
Adam & $0.0057 \pm 0.0017 $ & $0.0431 \pm 0.0007 $ & \colorbox{BlueGreen}{$22 / 176 $} \\
AdaHessian & $0.0165 \pm 0.0049 $& $0.0604  \pm 0.0044 $& $48 / 224 $ \\
Shampoo & $ - $& $-$& $606 / 12,442 $ \\
Apollo & $ 0.02309 \pm 0.00159 $& $0.0672 \pm 6.3 \times 10^{-7}$& $ 31 / 197 $ \\
Simba & \colorbox{BlueGreen}{$ 0.00250 \pm 0.0005 $}& \colorbox{BlueGreen}{$0.0043 \pm 8.1 \times 10^{-5} $}& $ 54 / 467 $ \\
\hline
\end{tabular}
\label{tab: autoencoders}
\end{table*}

\section{Numerical Experiments}
In this section we validate the efficiency of Simba to a number machine learning problems. Our goal is to illustrate that Simba outperforms the state-of-the-art diagonal optimization methods for problems with saddle points and flat areas or when the gradients are vanishing. For this purpose, we consider a non-linear least squares problems and two deep autoencoders where optimization methods often converge to suboptimal solutions. In addition, we demonstrate that our method is efficient and offers comparable, if not better, generalization errors compared to the state-of-the-art optimization methods on standard benchmark ResNets using CIFAR10 and CIFAR100 datasets.

\textbf{Algorithms and  set up:}
We compare Simba with momentum (algorithm \ref{alg: alg}) against Adam \cite{kingma2014adam}, AdaHessian \cite{yao2021adahessian}, Apollo \cite{ma2020apollo} and Shampoo \cite{gupta2018shampoo} on a Tesla T4 GPU with a 16G RAM. The GPU time for 
Shampoo is not comparable to that of the other algorithm 
and hence its behaviour is reported only for the autoencoder problems.
For all algorithms, the learning rate was selected using 
a grid search for $t_k \in $\{1e-4, 5e-4, 1e-3, 5e-3, 
1e-2, 5e-2, 1e-1, 5e-1, 1\}. For all the algorithms we 
set the momentum parameters to their default 
values. For Simba we set $r=20$ in all experiments.
Through all the experiments the batch size is set to 
$128$. For all algorithms we comprehensively tuned 
$\operatorname{eps}$; the default value is selected when others do not
yield an improvement. In our case we denote $m \equiv \operatorname{eps}$.

\subsection{Non-linear least-squares}

Given a training dataset $\{\w{a}_i, b_i\}_{i=1}^m$, 
$\w{a}_i \in \mathbb{R}^n$ and $b_i \in \mathbb{R}$, we 
consider solving the following non-linear least-squares 
problem
\begin{equation*}
	\min_{\w{x} \in \R^n} \frac{1}{m} \sum_{i=1}^{m}
	\left(b_i - g(\w{a}_i^T \w{x}) \right)^2, \quad g(\omega) := 
	\frac{1}{1 + \exp(\omega)}, 
\end{equation*}
which is a non-convex optimization
problem. Here, we consider the Gisette dataset \footnote{Dataset available at: \url{https://www.csie.ntu.edu.tw/~cjlin/libsvmtools/datasets/binary.html}}
for which $m = 6000$ and $n = 5000$. Furthermore, for Adam we select $t_k = 0.001$ and 
$\operatorname{eps} = 10^{-8}$ while for Apollo we set 
$t_k = 0.01$ and $\operatorname{eps}=10^{-4}$. For 
AdaHessian, $t_k = 0.1$ and the hessian power parameter is set to 
to $0.5$. For Simba we select $n_\ell = 250, t_k = 0.05$ and 
$\operatorname{eps} = 10^{-12}$. The performance of optimization methods appear in Figure \ref{fig: nlls algs}.
Observe that this problems has several flat areas or 
saddle points which slow down the convergence of all algorithms. Nevertheless, the method with the best behaviour is Simba which enjoys a much faster escape rate from the saddle points and thus always returns
lower training errors. This can be also observed 
for different initialization points. In Table \ref{tab: nlls} we report the average training error and the standard deviation over $5$ runs. The method that comes closer to Simba is Adam.
Further, from Figure \ref{fig: nlls algs} we see that, although the wall-clock time of Simba is two to three times increased
compared to its competitors, the total time of our method is much better due to its fast escape rate near saddles and flat areas. Further, in Figure \ref{fig: nlls Ns} in the supplementary material we compare the performance of Simba for different sizes in the coarse model. We observed that even Simba with $n_\ell = 25$ (i.e., updating only $25$ parameters at each iteration) enjoys better escape rate than its competitors. This indicates that diagonal methods perform poorly near saddles and flat areas and highlights the importance of constructing meaningful preconditioners. Moreover, observe that the best wall-clock time is achieved with only $5\%$ of the dimensions. This indicates that Simba significantly reduces the computational cost of preconditioned methods without compromising the convergence rate.

\begin{figure*}
\centering
\begin{subfigure}{.24\textwidth}
\centering
  \includegraphics[width=.98\linewidth]{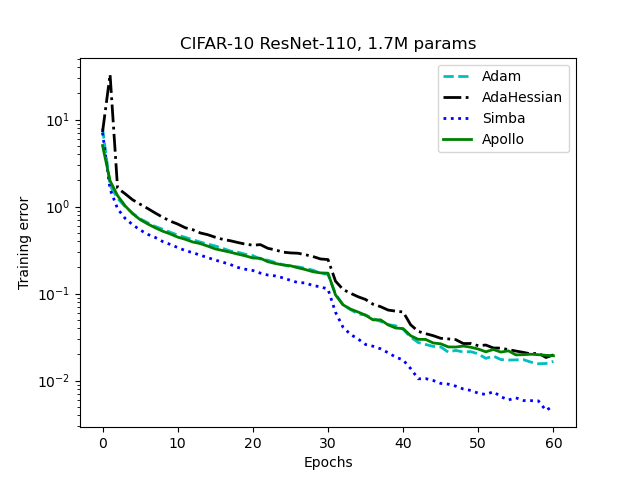}  
\end{subfigure}
\begin{subfigure}{.24\textwidth}
\centering
  \includegraphics[width=.98\linewidth]{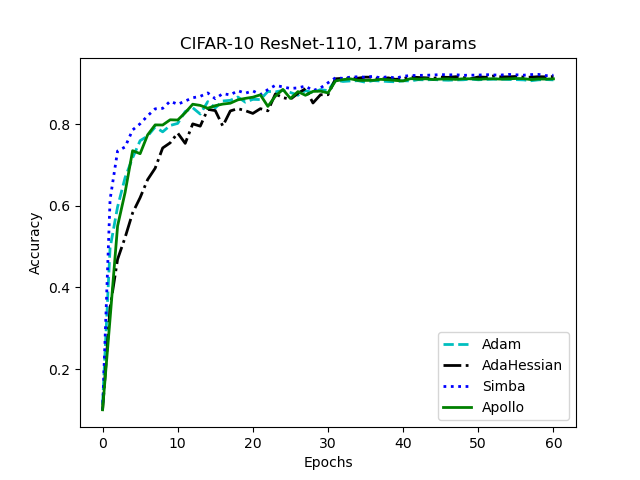}  
\end{subfigure}
\begin{subfigure}{.24\textwidth}
\centering
  \includegraphics[width=.98\linewidth]{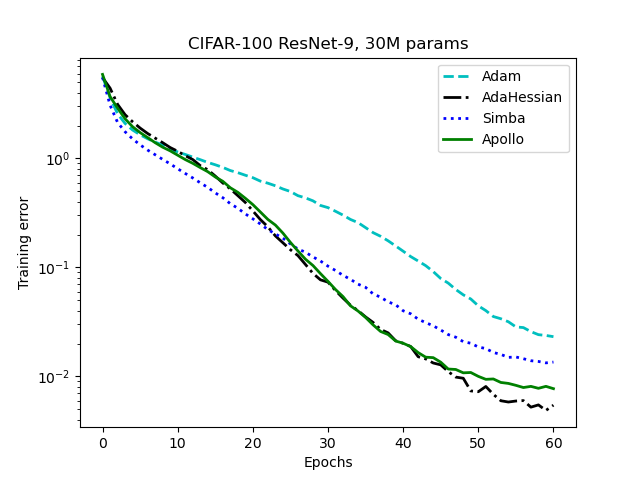}  
\end{subfigure}
\begin{subfigure}{.24\textwidth}
\centering
  \includegraphics[width=.98\linewidth]{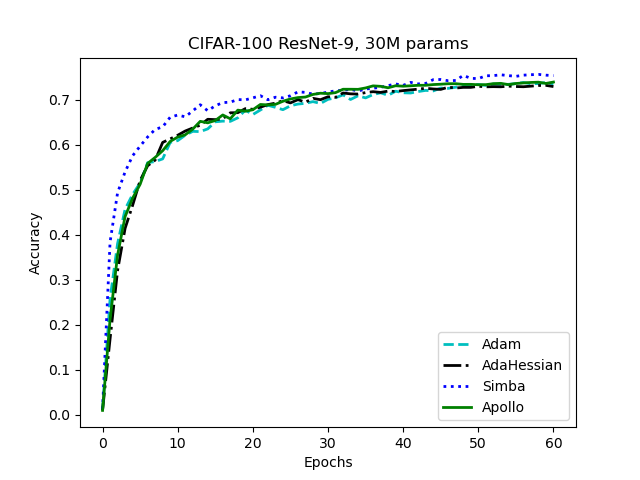}  
\end{subfigure}
\caption{Convergence behaviour of various algorithms on CIFAR10 
using ResNet110 and CIFAR100 using ResNet9.}
\label{fig: resnet110 and resnet9}
\end{figure*}
\begin{table*}
\caption{Mean training error and accuracy $\pm$ standard deviation of various optimization algorithms over $5$ on CIFAR10 and CIFAR100, respectively.}
\centering
\begin{tabular}{c|c|c|c|c}
\hline
  &\multicolumn{2}{ c| }{ ResNet-110 - CIFAR10} & \multicolumn{2}{ c }{ResNet-9 - CIFAR100} \\
\hline
Algorithm &  Training Error  & Accuracy & Training Error & Accuracy\\
\hline
Adam & $0.0284 \pm 0.0060 $ & $91.03 \pm 0.16 $ & $0.0237 \pm 0.0005 $ & $73.91 \pm 0.18$ \\

AdaHessian & $0.0560 \pm 0.0427 $& $91.34  \pm 1.04 $& $0.0081 \pm 0.0036 $ &
$72.93 \pm 0.66$\\

Apollo & $ 0.0315 \pm 0.0066 $& $91.18 \pm 0.25$& \colorbox{BlueGreen}{ $ 0.0076 \pm 0.0003 $} & $73.70 \pm 0.19$\\

Simba & \colorbox{BlueGreen}{$ 0.0097 \pm 0.0023 $}& \colorbox{BlueGreen}{$92.07 \pm 0.09$}& $ 0.0133 \pm 0.0001 $ & \colorbox{BlueGreen}{$75.41 \pm 0.22$}\\
\hline
\end{tabular}
\label{tab: resnets}
\end{table*}

\subsection{Deep autoencoders}
In this section we investigate the performance of Simba on two deep autoencoder optimization problems that arise from (\cite{hinton2006reducing}), named CURVES and MNIST. Both optimization problems are considered difficult to solve and have become a standard benchmark for optimization methods due to the presence of saddle points and the issue of the vanishing gradients.
The CURVES\footnote{Dataset available at: \url{www.cs.toronto.edu/~jmartens/
digs3pts_1.mat}} autoencoder consists of an encoder with layer size $28\times28, 400, 200,100,50,25,6$, a symmetric decoder which totals to $0.8$M parameters.
For all layers the $\operatorname{relu}$ activation function is applied. The network was trained on $20,000$ images and tested on $10,000$. 
We set the learning rate equal to $0.001, 1, 0.05, 0.5$ and $0.01$ for Adam, Apollo, AdaHessian, Shampoo and Simba, respectively. 
The $\operatorname{eps}$ parameter is selected $10^{-15}$ and $10^{-8}$ for Apollo and Simba, respectively, and set to its default value for the rest algorithms. The hessian power parameter for AdaHessian is selected $1$. 
Further, the MNIST\footnote{Dataset available at: \url{https://pytorch.org/vision/main/datasets.html}} autoencoder consists of an encoder with layer size $28\times28, 1000, 500, 250, 30$ and a symmetric decoder which totals to $2.8$M parameters. In this network we use the $\operatorname{sigmoid}$ activation function to all layers. 
The training set for the MNIST dataset consists of $60,000$ images while the test set has $10,000$ images. 
Here, $t_k$ is set equal to $0.001, 0.9, 0.1, 0.05$ and $0.05$ for Adam, Apollo, AdaHessian, Shampoo and Simba, respectively. 
The $\operatorname{eps}$ parameter is set equal to $10^{-8}$ for Simba and to its default value for all other algorithms; the hessian power of AdaHessian is set equal to $1$. For both autoencoder problems the coarse model size parameter is set $n_\ell = 0.5N$.

The comparison between the optimization algorithms for the two deep autoencoders appears in Table  \ref{tab: autoencoders} and Figure \ref{fig: autoencoders}. Clearly, Simba performs the best among the optimization algorithms resulting in the lowest training error in both CURVES and MNIST autoencoders. As a result, Simba performs better than its competitors on the test set too. It is evident that all the other methods get stuck in stationary points with large errors. In these points we observed that the gradients become almost zero which results in a very slow progress of Adam, AdaHessian, Apollo and Shampoo. On the other, Simba enjoys a very fast escape rate from such points and thus it yields a very fast decrease in the training error.  In addition, Table  \ref{tab: autoencoders} reports the wall-clock time for each optimization algorithm. We see that Simba has an increased per-iteration costs (by a factor between two and three compared to Adam), nevertheless this is a reasonable price to pay for achieving a good decrease in the value function.
The averaged behaviour of Shampoo is missing due to its expensive iterations. For this reason we omit Shampoo from the following experiments.

\subsection{Residual Neural Networks}
In this section we report comparisons on the convergence and generalization performance between the optimization algorithms using ResNet110\footnote{Network implementation available at: \url{https://github.com/akamaster/pytorch_resnet_cifar10}} and ResNet9\footnote{Network implementation available at: \url{https://www.kaggle.com/code/yiweiwangau/cifar-100-resnet-pytorch-75-17-accuracy}} with CIFAR-10 and CIFAR-100 datasets, respectively. 
ResNet110 consists of 110 layers and a total of $1.7$M parameters while ResNet9 has 9 layers and $30$M parameters. Hence, our goal is to investigate the performance of Simba on benchmark problems and different architectures, i.e., deep and wide networks.

For ResNet110 we set an initial learning rate parameter equal to $0.001, 0.1, 0.1, 0.01$ for Adam, Apollo, AdaHessian and Simba, respectively. We train the network for a total number of $60$ epochs and decrease the learning rate by a factor of $5$ at epochs $30$ and $40$. The hessian power parameter for AdaHessian is selected $1$ and for Simba the hyper-parameter $\operatorname{eps}$ is set to $10^{-8}$ while $n_\ell = 0.5N$. The convergence and generalization performance between the optimization algorithms is illustrated in Figure \ref{fig: resnet110 and resnet9} and Table \ref{tab: resnets}. We see that Simba is able to achieve  results that are comparable, if not better, with that of Adam, AdaHessian and Apollo in both training and generalization errors. The total GPU time of Simba is about three times larger than that of Adam, which is the fastest algorithm, but it is considerably smaller than that of AdaHessian (see Figure \ref{fig: resnet110} in the supplement).

For ResNet9, we set an initial value on $t_k$ as follows:
$0.0005,  0.001,  0.05, 0.005$ for Adam, Apollo, AdaHessian and Simba, respectively. We train the network for $60$ epochs and use cosine annealing to determine the learning rate at each epoch and set minimum $t_k$ value equal to $0.01t_k, 0.1t_k, 0.01t_k, 0.02t_k$ for Adam, Apollo, AdaHessian and Simba, respectively. The hessian power, $\operatorname{eps}$ and $n_\ell$ parameters are set as above. 
Since classifying CIFAR100 is a much more difficult problem than CIFAR10, to improve the accuracy we use weight decay and gradient clipping with values fixed at $0.001$ and $0.01$, respectively, for all algorithms. Figure \ref{fig: resnet110 and resnet9} and Table \ref{tab: resnets} indicate that Simba is able to offer very good generalization results on a difficult task. Further, we see that Simba is less than two times slower in wall-clock time than Adam and Apollo but again considerably faster than AdaHessian (see Figure \ref{fig: resnet9} in the supplement). Figure \ref{fig: resnet9} also shows that the total GPU time of Simba for reaching the desired classification accuracy is comparable to that of Adam and Apollo which indicates the efficiency of our method on large-scale optimization problems.

\section{Conclusions}
We present Simba, a scalable bilevel optimization algorithm to address the limitations of first-order methods in non-convex settings. We demonstrate the fast escape rate from saddles of our method through empirical evidence. Numerical results also indicate that Simba achieves good generalization errors on modern machine learning applications. Convergence guarantees of Simba when locally assume convex functions are also established. As a future work, we aim to apply Simba for the training of LLMs such as GPT.

\bibliographystyle{unsrtnat}
\bibliography{references}  

\newpage
\appendix

\section{Extra Numerical Results}
Here, we report some extra numerical results. Figure \ref{fig: nlls Ns} shows how the size of the coarse model affects the convergence behaviour of Simba on the non-linear least squares problem. Additionally, Figures \ref{fig: resnet110} and \ref{fig: resnet9} show the wall-clock time for training the residual neural networks for 60 epochs.

\begin{figure}
\centering
\begin{subfigure}{.48\textwidth}
\centering
  \includegraphics[width=.98\linewidth]{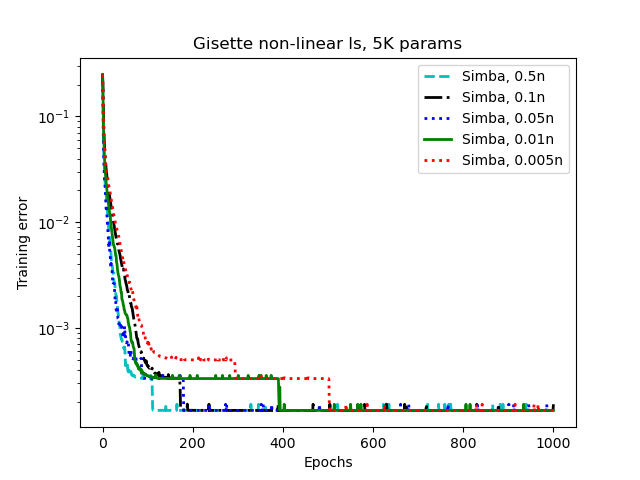}  
\end{subfigure}
\begin{subfigure}{.48\textwidth}
\centering
  \includegraphics[width=.98\linewidth]{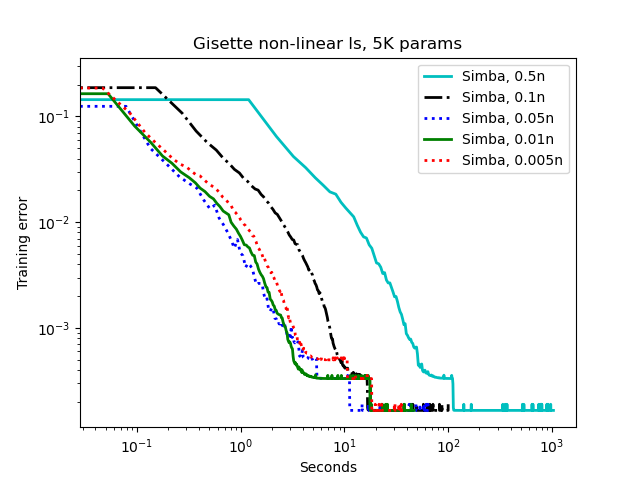}  
\end{subfigure}
\caption{Convergence behaviour of Simba for different coarse models. In all cases Simba is initialized at the origin. The parameter $r$ is set to $10$ for $n_\ell = 0.005N$ and $20$ otherwise. Further, $t_k$ is selected $0.01$ for $n=0.5N$ or $n=0.1N$ and $0.05$ otherwise. Interestingly,  we see that Simba with $n = 25$ matches the performance of Simba that use much larger coarse models. Note that, by Definition \ref{def: P}, Simba only updates $25$ parameters at each iteration which indicates that near saddles and flat areas one should employ more informative directions than those of the first-order methods with second-order momentum.  As expected, we see that a better convergence and escape rate is expected for larger coarse models but this comes at the cost of more expensive iterations. The figure shows that Simba with $n=250$ has the best total complexity.}
\label{fig: nlls Ns}
\end{figure}

\begin{figure}
\centering
\begin{subfigure}{.24\textwidth}
\centering
  \includegraphics[width=.98\linewidth]{res/Train_Loss_ResNet110.png}  
\end{subfigure}
\begin{subfigure}{.24\textwidth}
\centering
  \includegraphics[width=.98\linewidth]{res/Accuracy_ResNet110.png}  
\end{subfigure}
\begin{subfigure}{.24\textwidth}
\centering
  \includegraphics[width=.98\linewidth]{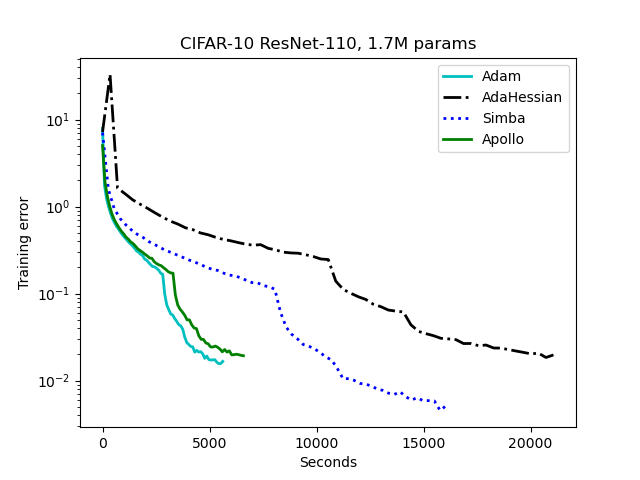}  
\end{subfigure}
\begin{subfigure}{.24\textwidth}
\centering
  \includegraphics[width=.98\linewidth]{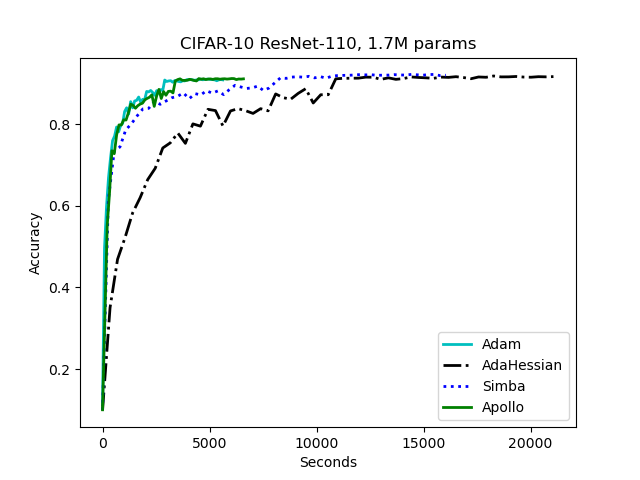}  
\end{subfigure}
\caption{Convergence behaviour and wall-clock time of various algorithms on CIFAR10 
using ResNet110.}
\label{fig: resnet110}
\end{figure}

\begin{figure}
\centering
\begin{subfigure}{.24\textwidth}
\centering
  \includegraphics[width=.98\linewidth]{res/Train_Loss_ResNet9.png}  
\end{subfigure}
\begin{subfigure}{.24\textwidth}
\centering
  \includegraphics[width=.98\linewidth]{res/AccuracyResNet9.png}  
\end{subfigure}
\begin{subfigure}{.24\textwidth}
\centering
  \includegraphics[width=.98\linewidth]{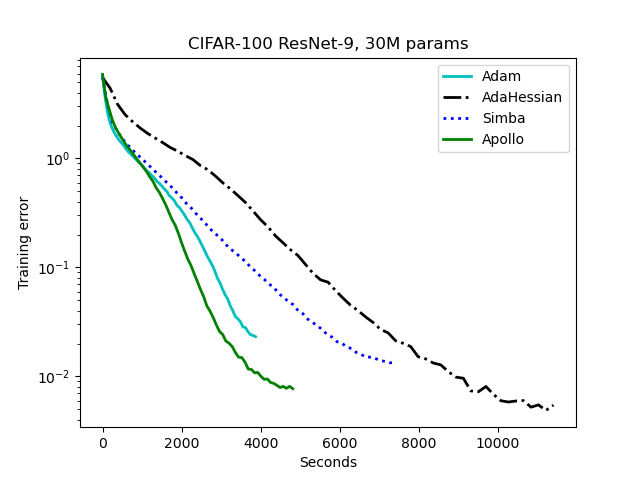}  
\end{subfigure}
\begin{subfigure}{.24\textwidth}
\centering
  \includegraphics[width=.98\linewidth]{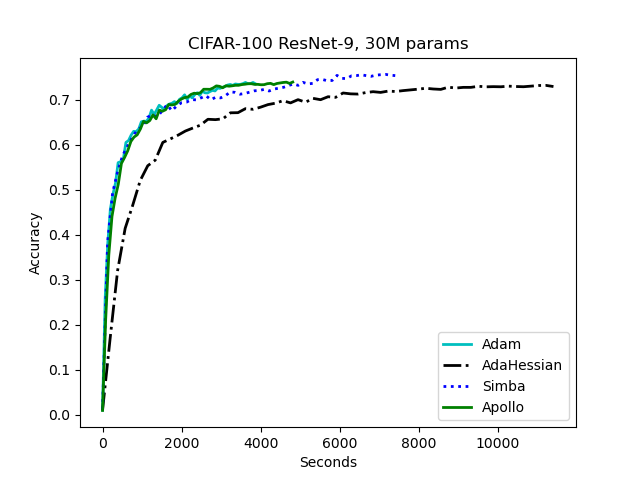}  
\end{subfigure}
\caption{Convergence behaviour and wall-clock time of various algorithms on CIFAR100 using ResNet9.}
\label{fig: resnet9}
\end{figure}

\end{document}